\providecommand{\tabularnewline}{\\}
\numberwithin{equation}{section}
\numberwithin{figure}{section}
\title{}
\newtheorem{theorem}{Theorem}
\theoremstyle{plain}
\newtheorem{definition}{Definition}
\newtheorem{example}{Example}
\newtheorem{lemma}{Lemma}
\newtheorem{proposition}{Proposition}
\newtheorem{remark}{Remark}
\numberwithin{equation}{section}
\begin{document}

\title{Fuzzy Cosets and Quotient Fuzzy AG-subgroups}

\author{Amanullah$^{1}$}

\email{amanswt@gmail.com}

\author{I. Ahmad$^{1,*}$}

\email{iahmaad@hotmail.com}

\address{1. Department of Mathematics University of Malakand, Chakdara Dir
Lower, Khyber Pakhtunkhwa, Pakistan.}

\author{M. Shah$^{2}$}

\email{shahmaths\_problem@hotmail.com}

\address{2. Department of Mathematics, Government Post Graduate College Mardan,
Khyber Pakhtunkhwa, Pakistan.}

\keywords{AG-group; Fuzzy AG-subgroup; Normal fuzzy AG-subgroup; Fuzzy cosets
and fuzzy quotient AG-subgroups.\\
{*}Corresponding author }
\begin{abstract}
In this paper we extend the concept of fuzzy AG-subgroups. We introduce
some results in \emph{normal fuzzy AG-subgroups. }We\emph{ }define
\emph{fuzzy cosets} and \emph{quotient fuzzy AG-subgroups}, and prove
that the sets of their collection form an AG-subgroup and fuzzy AG-subgroup
respectively. We also introduce the fuzzy Lagrange's Theorem of AG-subgroup.
It is known that the condition $\mu(xy)=\mu(yx)$ holds for all $x,y$
in fuzzy subgroups if $\mu$ is normal, but in fuzzy AG-subgroup we
show that it holds without normality.
\end{abstract}
\maketitle

\section{Introduction}

The concept of fuzzy sets along with various operations has been introduced
by Lofti A. Zadeh in 1965 \cite{Zad}. Due to the diverse applications
ranging from engineering, computer science and social behavior studies,
the researchers have taken keen interest in the subject in its related
fields. The study of fuzzy algebraic structures was started by introducing
the concept of fuzzy subgroups by A. Rosenfeld \cite{Ros}. He formulated
the concept of fuzzy subgroup and extend the main idea of group theory
to develop the theory of fuzzy groups. Anthony and Sherwood further
redefined fuzzy groups \cite{Anto}. Many other papers on fuzzy subgroups
have also appeared which generalize various concepts of group theory
such as normal subgroups, quotient groups and cosets \cite{Meng,Wu,Wan}.

\noindent In the forty years history of AG-groupoids, though it was
explored slowly, yet in the last couple of years abundant research
was carried out in this area which attracted the attention of many
new researchers. An AG-groupoid is a generalization of commutative
semigroup. It is a nonassociative groupoid in general, in which the
left invertive law $(ab)c=(cb)a$ holds. In general, an AG-group is
a nonassociative structure in which commutativity and associativity
imply each other, and thus it becomes abelian group if any one of
them is allowed. An AG-groupoid $(G,\cdot)$ is called an AG-group
or left almost group (LA-group), if there exists a unique left identity
$e\in G$ (that is $ea=a$ for all $a\in G$ ), and for all $a\in G$
there exists $a^{^{-1}}\in G$ such that $a^{^{-1}}a=aa^{-1}=e$.
M. Kamran extended the notion of AG-groupoid to an AG-group and defined
cosets of an AG-subgroup $H$ of an AG-group $G$ and proved that
quotient $G/H$ is defined for every AG-subgroup $H$. He also proved
that Lagrange's Theorem holds for AG-group \cite{Kamran}. The third
author of this article has discussed various basic properties of AG-groups
and explored new results such as: complexes and cosets decomposition,
conjugacy relations in AG-groups, normality, normalizers and many
more \cite{Sha Thes,MshahT}. For the first time in 2003, Q. Mushtaq
and M. Khan introduced ideals in AG-groupoid and fuzzified these concepts
\cite{idealsinAG}. This attracted the attention of various other
researchers to the field of AG-groupoids and AG-groups, as a result
since then we can see lots of papers in this area. It is also worth
mentioning that various new classes of AG-groupoids have been recently
introduced \cite{SIA1,intro2,key-1-13,RAAS4,AGst5,AGst-6} and some
are just have been arXived \cite{mod,mod m} and their fuzzification
is suggested as an interesting future work.

In this paper we extend the concepts of normal fuzzy AG-subgroup \cite{I. Ahmad,key-1}.
We further define fuzzy cosets, quotient AG-subgroups and quotient
fuzzy AG-subgroups, which will provide new direction to the researchers
in this area. We also introduce a fuzzy version of the famous Lagrange's
Theorem for finite AG-groups.

\section{Preliminaries}

In this section we list some basic definitions that will frequently
be used in the subsequent sections of the paper.

\noindent A \emph{fuzzy subset} $\mu$ is a mapping $\mu:X\rightarrow[0,1]$.
The set of all fuzzy subsets of $X$ is called the \emph{fuzzy power
set} of $X$ and is denoted by $FP(X)$. Let $\mu\in FP(X)$, then
the the image of $\mu$ is a set $\{\mu(x)\,:\, x\in X\}$ and is
denoted by $\mu(X)$ or $Im(\mu).$ 

\noindent In the rest of this paper $G$ will denote an AG-group otherwise
stated and $e$ will denote the left identity of $G$.

\noindent \begin {definition} \cite{I. Ahmad} Let $\mu\in FP(G)$,
then $\mu$ is called a fuzzy AG-subgroup of $G$ if for all $x,y\in G$;

\begin{enumerate}[(i)]

\item $\mu(xy)\geq\mu(x)\wedge\mu(y)$;

\item $\mu(x^{-1})\geq\mu(x)$.

\end{enumerate}

\noindent The set of all fuzzy AG-subgroups of $G$ is denoted by
$F(G)$.

\noindent If $\mu\in F(G)$, then
\begin{equation}
\mu_{\ast}=\{x\in G\,|\text{\,}\mu(x)=\mu(e)\}.\label{a1}
\end{equation}
\end {definition}

\noindent \begin {lemma}\emph{\label{ad}\cite{I. Ahmad}} Let $\mu$
be any fuzzy AG-subgroup of $G$ i.e $\mu\in F(G)$, then for all
$x\in G$,

\begin{enumerate}[(i)]

\item$\mu(e)\geq\mu(x)$;

\item $\mu(x)=\mu(x^{-1})$.

\end{enumerate}

\noindent \end {lemma}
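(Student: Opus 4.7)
The plan is to prove the two parts in the order (ii) then (i), since (i) will rely on (ii). Both parts should follow rather quickly from the two defining axioms of a fuzzy AG-subgroup together with the basic identities $(x^{-1})^{-1}=x$ and $xx^{-1}=e$ that hold in any AG-group.

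For part (ii), I would apply axiom (ii) of the definition to $x^{-1}$ in place of $x$: this gives $\mu\bigl((x^{-1})^{-1}\bigr)\geq\mu(x^{-1})$. Since $(x^{-1})^{-1}=x$ in the AG-group $G$, this simplifies to $\mu(x)\geq\mu(x^{-1})$. Combining with $\mu(x^{-1})\geq\mu(x)$, which is axiom (ii) applied directly to $x$, we obtain the equality $\mu(x)=\mu(x^{-1})$.

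For part (i), I would use the identity $e=xx^{-1}$ and axiom (i) of the definition to write
\begin{equation*}
\mu(e)=\mu(xx^{-1})\geq\mu(x)\wedge\mu(x^{-1}).
\end{equation*}
By part (ii) just proved, $\mu(x^{-1})=\mu(x)$, so $\mu(x)\wedge\mu(x^{-1})=\mu(x)$, and therefore $\mu(e)\geq\mu(x)$, as required.

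No real obstacle is anticipated here: the lemma is essentially the fuzzy-group analogue of the statement that the identity attains the maximum value of $\mu$ and that $\mu$ is invariant under inversion. The only minor point to be careful about is the order of the arguments, ensuring that part (ii) is proved first so it can be used in part (i), and that the AG-group identities $(x^{-1})^{-1}=x$ and $xx^{-1}=e$ are invoked explicitly (these are given in the preliminary description of an AG-group in the introduction).
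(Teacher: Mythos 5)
Your proof is correct and is the standard derivation: part (ii) follows by applying the inversion axiom to both $x$ and $x^{-1}$ (using $(x^{-1})^{-1}=x$, which holds since inverses in an AG-group are two-sided and unique), and part (i) then follows from $\mu(e)=\mu(xx^{-1})\geq\mu(x)\wedge\mu(x^{-1})=\mu(x)$. The paper itself gives no proof to compare against --- the lemma is imported from the cited reference on fuzzy AG-subgroups --- but your argument is exactly the expected one; the only remark worth adding is that part (i) does not actually need part (ii), since $\mu(x^{-1})\geq\mu(x)$ from the axiom already gives $\mu(x)\wedge\mu(x^{-1})=\mu(x)$.
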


\noindent \begin {definition}\cite{I. Ahmad} Let $\mu\in F(G)$.
Then $\mu$ is called a \emph{normal fuzzy AG-subgroup}\textbf{ }of
$G$ if
\[
\mu(xy\cdot x^{-1})=\mu(y)\text{ }\forall x,y\in G.
\]
The set of all normal fuzzy AG-subgroups of $G$ is denoted by $NF(G)$.
\end {definition}

\noindent \begin {example} Consider an AG-group of order $4$:

\noindent \begin{center}
\begin{tabular}{l|llll}
$\cdot$  & $0$  & $1$  & $2$  & $3$\tabularnewline
\hline 
$0$  & $0$ & $1$ & $2$ & $3$\tabularnewline
$1$  & $3$ & $0$ & $1$ & $2$\tabularnewline
$2$  & $2$ & $3$ & $0$ & $1$\tabularnewline
$3$ & $1$ & $2$ & $3$ & $0$\tabularnewline
\end{tabular}
\par\end{center}

\noindent \begin{flushleft}
define fuzzy subset $\mu$ by $\mu(0)=t_{0}$ and $\mu(x)=t_{1}$
otherwise; where $t_{0},t_{1}\in[0,1]$ and $t_{0}>t_{1}$. Then $\mu$
is fuzzy AG-subgroup. However, $\mu$ is not normal fuzzy AG-subgroup.
Here $\mu_{*}=\{0\}$ which is normal fuzzy AG-subgroup.
\par\end{flushleft}

\noindent \end {example}

\noindent \begin {example} Consider an AG-group $G=<a,b:\, a^{2}=b^{2}=(ab)^{2}=e>$,
define fuzzy subset $\mu:G\rightarrow[0,1]$ by $\mu(e)=t_{0},\,\mu(a)=t_{1}$
and $\mu(x)=t_{2}$ otherwise; where $t_{0},t_{1},t_{2}\in[0,1]$
and $t_{0}>t_{1}>t_{2}$. Then $\mu$ is normal fuzzy AG-subgroup
of $G$.

\noindent \end {example}

\section{Main Results}

\begin {proposition} \label{b-1}Let $\mu\in F(G)$. Then $\mu(xy)=\mu(yx)$
$\forall x,y\in G$.\end {proposition}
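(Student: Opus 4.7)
The plan is to reduce the problem to a simple algebraic identity in the underlying AG-group, and then invoke the two axioms defining a fuzzy AG-subgroup together with Lemma \ref{ad}.

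The key algebraic observation I would establish first is that in any AG-group $G$, the equation $xy=(yx)e$ holds for all $x,y\in G$. This follows immediately from the left invertive law $(ab)c=(cb)a$ by taking $a=e$, $b=x$, $c=y$, since then $(ex)y=(yx)e$ and the left identity gives $ex=x$. No deeper AG-group machinery (medial or paramedial laws) is needed for this step.

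With this identity in hand, the rest is a two-line fuzzy argument. Applying $\mu$ to both sides of $xy=(yx)e$ and using condition (i) of the definition of a fuzzy AG-subgroup, I get
\[
\mu(xy)=\mu\bigl((yx)e\bigr)\geq \mu(yx)\wedge \mu(e).
\]
By part (i) of Lemma \ref{ad}, $\mu(e)\geq \mu(yx)$, so the minimum equals $\mu(yx)$, yielding $\mu(xy)\geq \mu(yx)$. Swapping the roles of $x$ and $y$ (the identity $yx=(xy)e$ holds by the same derivation) gives the reverse inequality, hence equality.

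I expect no real obstacle here: once the identity $xy=(yx)e$ is noticed, the conclusion is essentially forced by the definition and Lemma \ref{ad}. The only subtlety to flag for the reader is that, unlike in the group case where $\mu(xy)=\mu(yx)$ typically requires $\mu$ to be normal, in the AG-group setting the failure of $e$ to be a two-sided identity is precisely what makes the left invertive law produce the factor of $e$, and this extra $e$ is harmless under $\mu$ because $\mu(e)$ dominates every value of $\mu$.
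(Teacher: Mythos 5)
Your proof is correct and follows essentially the same route as the paper: both write $xy=(ex)y=(yx)e$ via the left invertive law, then apply condition (i) of the definition of a fuzzy AG-subgroup together with Lemma \ref{ad}(i) to get $\mu(xy)\geq\mu(yx)$, and conclude by symmetry. No substantive difference.
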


\noindent \begin {proof} Let $\mu\in F(G)$, then
\begin{eqnarray*}
\mu(xy) & = & \mu(ex\cdot y)=\mu(yx\cdot e)\qquad[\mbox{using left invertive law}]\\
 & \geq & \mu(yx)\wedge\mu(e)=\mu(yx)\,\,\quad[\mu(e)\geq\mu(yx)\,\,\forall\, x,y\in G]\\
\Rightarrow\mu(xy) & \geq & \mu(yx)\quad\forall\, x,y\in G.
\end{eqnarray*}
Similarly, we can show that $\mu(yx)\geq\mu(xy)\,\,\forall\, x,y\in G$.
Thus $\mu(xy)\geq\mu(yx)\geq\mu(xy)\,\,\forall\, x,y\in G$. Hence
$\mu(yx)=\mu(xy)$. \end {proof}

\noindent \begin {lemma}\label{a}\cite[Lemma 17]{I. Ahmad} Let
$\mu$ be a fuzzy AG-subgroup of $G$. Let $x\in G$ then $\mu(xy)=\mu(y)$
$\forall\, y\in G$ if and only if $\mu(x)=\mu(e)$. \end {lemma}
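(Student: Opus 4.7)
The statement is a biconditional, so the plan is to handle the two implications separately, leaning on Proposition \ref{b-1} (the automatic commutativity of $\mu$ inside an AG-subgroup) and Lemma \ref{ad}(i) (which guarantees $\mu(e)\ge \mu(z)$ for every $z\in G$).

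For the forward direction, I would specialise the hypothesis $\mu(xy)=\mu(y)$ at $y=e$, obtaining $\mu(xe)=\mu(e)$. Since $e$ is only a \emph{left} identity we cannot replace $xe$ by $x$ directly, but Proposition \ref{b-1} gives $\mu(xe)=\mu(ex)=\mu(x)$, and the conclusion $\mu(x)=\mu(e)$ follows. This direction is essentially free.

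For the converse, assume $\mu(x)=\mu(e)$. The easy half is $\mu(xy)\ge \mu(y)$: by the AG-subgroup axiom, $\mu(xy)\ge \mu(x)\wedge\mu(y)=\mu(e)\wedge\mu(y)=\mu(y)$, where the last equality uses Lemma \ref{ad}(i). The real work is the reverse inequality $\mu(y)\ge \mu(xy)$, and this is where I expect the main obstacle: in the absence of associativity I cannot simply ``cancel $x$'' by multiplying on the left by $x^{-1}$. The idea is to re-express $y$ using the left invertive law. Writing $y=(x^{-1}x)y$ and applying $(ab)c=(cb)a$ with $a=x^{-1}$, $b=x$, $c=y$ yields
\[
y=(x^{-1}x)y=(yx)x^{-1}.
\]
Then, using the AG-subgroup axiom together with $\mu(x^{-1})=\mu(x)=\mu(e)$ (Lemma \ref{ad}(ii) and the hypothesis),
\[
\mu(y)=\mu\bigl((yx)x^{-1}\bigr)\ge \mu(yx)\wedge \mu(x^{-1})=\mu(yx)\wedge\mu(e)=\mu(yx).
\]
Finally Proposition \ref{b-1} converts $\mu(yx)$ into $\mu(xy)$, giving $\mu(y)\ge \mu(xy)$.

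Combining the two inequalities yields $\mu(xy)=\mu(y)$ for all $y\in G$, completing the converse. The key technical move throughout is the substitution $y=(yx)x^{-1}$, which is the AG-group analogue of ``left cancellation'' and is the only place where the nonassociative structure really bites; everything else is bookkeeping with Proposition \ref{b-1} and Lemma \ref{ad}.
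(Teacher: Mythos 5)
Your proof is correct. Note that the paper itself offers no proof of this lemma --- it is imported verbatim as Lemma 17 of the cited reference \cite{I. Ahmad} --- so there is no internal argument to compare against; what you have written is a valid, self-contained derivation from results that are available in this paper. The forward direction via $y=e$ together with Proposition \ref{b-1} (to pass from $\mu(xe)$ to $\mu(x)$, since $e$ is only a left identity) is sound. In the converse, the inequality $\mu(xy)\geq\mu(e)\wedge\mu(y)=\mu(y)$ is immediate from the AG-subgroup axiom and Lemma \ref{ad}(i), and your key identity $y=(x^{-1}x)y=(yx)x^{-1}$ is a correct instance of the left invertive law $(ab)c=(cb)a$ with $a=x^{-1}$, $b=x$, $c=y$; it does exactly the job of left cancellation that associativity would otherwise provide. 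The final passage $\mu(y)\geq\mu(yx)=\mu(xy)$ via Proposition \ref{b-1} closes the argument. No gaps.
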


\noindent \begin {theorem} Let $f$ be a homomorphism on AG-group
$G$ and $\mu$ is any normal fuzzy AG-subgroup of $f(G)$. Then $\mu\circ f\in NF(G)$.
\end {theorem}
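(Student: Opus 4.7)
The plan is to verify, in turn, the three defining conditions for $\mu\circ f$ to belong to $NF(G)$: the two fuzzy AG-subgroup conditions of the definition, and the normality condition. In each case, the strategy is simply to push the argument through the homomorphism $f$, reducing the assertion about $\mu\circ f$ on $G$ to the corresponding property of $\mu$ on $f(G)$, which is given.

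First I would establish condition (i). For any $x,y\in G$, using that $f$ preserves the AG-operation,
\[
(\mu\circ f)(xy)=\mu(f(xy))=\mu(f(x)f(y))\geq \mu(f(x))\wedge\mu(f(y)) = (\mu\circ f)(x)\wedge(\mu\circ f)(y),
\]
where the inequality uses that $\mu\in F(f(G))$. Next I would establish condition (ii). Here I need the standard fact that an AG-group homomorphism carries inverses to inverses, i.e.\ $f(x^{-1})=f(x)^{-1}$, which follows because $f(x^{-1})f(x)=f(x^{-1}x)=f(e)=e_{f(G)}$ (and similarly on the other side), together with uniqueness of inverses in an AG-group. Then
\[
(\mu\circ f)(x^{-1})=\mu(f(x)^{-1})\geq \mu(f(x)) = (\mu\circ f)(x).
\]
This shows $\mu\circ f\in F(G)$.

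Finally, for the normality condition, I would compute
\[
(\mu\circ f)(xy\cdot x^{-1})=\mu(f(xy\cdot x^{-1}))=\mu\bigl(f(x)f(y)\cdot f(x)^{-1}\bigr)=\mu(f(y))=(\mu\circ f)(y),
\]
the third equality being the normality of $\mu$ in $f(G)$. Hence $\mu\circ f\in NF(G)$.

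I do not expect any serious obstacle: the argument is essentially a routine transfer along the homomorphism. The only point requiring a brief justification is the identity $f(x^{-1})=f(x)^{-1}$ in the AG-group setting, and this is a direct consequence of the definition of an AG-group homomorphism together with the uniqueness of two-sided inverses guaranteed by the AG-group axioms.
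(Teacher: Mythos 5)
Your proposal is correct and follows essentially the same route as the paper: verify the two fuzzy AG-subgroup conditions and then the normality condition by pushing everything through $f$ and invoking the corresponding property of $\mu$ on $f(G)$. The only difference is that you explicitly justify $f(x^{-1})=f(x)^{-1}$, a step the paper uses silently, which is a reasonable addition.
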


\noindent \begin {proof} First we show that $\mu\circ f\in F(G)$.
Since
\begin{eqnarray*}
\mu\circ f(xy) & = & \mu(f(xy))\\
 & = & \mu(f(x)\cdot f(y))\\
 & \geq & \mu(f(x))\wedge\mu(f(y))\\
 & = & \mu\circ f(x)\wedge\mu\circ f(y)
\end{eqnarray*}
$\forall\, x,y\in G$, and
\begin{eqnarray*}
\mu\circ f(x^{-1}) & = & \mu(f(x^{-1}))\\
 & = & \mu((f(x))^{-1})\,\qquad\quad\\
 & = & \mu(f(x))\\
 & = & \mu\circ f(x).
\end{eqnarray*}
$\forall\, x\in G$. Hence $\mu\circ f\in F(G)$.

\noindent Next we show that $\mu\circ f\in NF(G)$, since
\begin{eqnarray*}
\mu\circ f(xy\cdot x^{-1}) & = & \mu(f(xy\cdot x^{-1}))\\
 & = & \mu(f(xy)\cdot f(x^{-1}))\\
 & = & \mu(\{f(x)f(y)\}\cdot(f(x))^{-1})\\
 & = & \mu(f(y))\,\,[\mu\in NF(f(G))]\\
 & = & \mu\circ f(y).
\end{eqnarray*}
$\forall\, x,y\in G$. Hence $\mu\circ f\in NF(G)$. \end {proof}

\noindent \begin {definition} \label{b}Let $\mu\in F(G)$, for any
$x\in G$ define a mapping
\begin{eqnarray*}
\mu_{x} & : & G\rightarrow[0,1],\,\,\mbox{by}
\end{eqnarray*}
\begin{eqnarray}
\mu_{x}(g) & = & \mu(gx^{-1})\,\,\forall g\in G.\label{eq:1a-1}
\end{eqnarray}
Then $\mu_{x}$ is called fuzzy coset of $G$ determined by $x$ and
$\mu$, and the collection of all fuzzy cosets of $\mu$ is represented
by \emph{$\mathsf{\mathit{\mathtt{\mathbb{\mathfrak{\mathscr{F}}}}}}$.}\end {definition}

\noindent In AG-groups we can define quotient AG-group by any AG-subgroup
without normality. Therefore, make use of this we can define quotient
AG-groups or factor AG-group as follows: 

\noindent \begin {theorem} \label{s}Let $\mu\in NF(G)$ and $G\diagup\mu=\{\mu_{x}\,:\, x\in G\}$.
Then $G\diagup\mu$ form an AG-group under the usual composition of
mappings define by \emph{$\mu_{x}\circ\mu_{y}=\mu_{xy}\,\,\forall\, x,y\in G$}.
\end {theorem}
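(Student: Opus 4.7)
The plan is to verify the AG-group axioms for $G/\mu$ under the rule $\mu_x \circ \mu_y = \mu_{xy}$: well-definedness, the left invertive law, a left identity, and inverses. The latter three fall out quickly once well-definedness is established, so the bulk of the work goes into showing the operation does not depend on the representatives of the cosets, and the main obstacle there is characterizing when two fuzzy cosets coincide.

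I would first prove the criterion $\mu_a = \mu_b$ if and only if $\mu(ab^{-1}) = \mu(e)$. The forward direction is immediate on substituting $g = a$ into $\mu_a(g) = \mu_b(g)$. For the converse, the medial law $(xy)(uv) = (xu)(yv)$, which follows in any AG-groupoid from the left invertive law, gives $(ab^{-1})(a^{-1}g) = (aa^{-1})(b^{-1}g) = b^{-1}g$. Since $\mu(ab^{-1}) = \mu(e)$, Lemma \ref{a} applied with $x = ab^{-1}$ yields $\mu((ab^{-1})(a^{-1}g)) = \mu(a^{-1}g)$, so $\mu(b^{-1}g) = \mu(a^{-1}g)$, and two applications of Proposition \ref{b-1} upgrade this to $\mu(gb^{-1}) = \mu(ga^{-1})$, i.e.\ $\mu_a(g) = \mu_b(g)$.

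With the criterion in hand, well-definedness of $\mu_x \circ \mu_y = \mu_{xy}$ reduces to the implication: if $\mu(ab^{-1}) = \mu(cd^{-1}) = \mu(e)$, then $\mu((ac)(bd)^{-1}) = \mu(e)$. Since $(bd)^{-1} = b^{-1}d^{-1}$ holds in any AG-group, the medial law rewrites $(ac)(b^{-1}d^{-1})$ as $(ab^{-1})(cd^{-1})$; the fuzzy-AG-subgroup inequality forces $\mu$ of this product to be at least $\mu(ab^{-1}) \wedge \mu(cd^{-1}) = \mu(e)$, while Lemma \ref{ad}(i) pins it down to at most $\mu(e)$.

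The remaining axioms are one line each: the left invertive law $(\mu_x \circ \mu_y) \circ \mu_z = \mu_{(xy)z} = \mu_{(zy)x} = (\mu_z \circ \mu_y) \circ \mu_x$ is a direct translation of $(xy)z = (zy)x$ in $G$; the coset $\mu_e$ is a left identity via $\mu_e \circ \mu_x = \mu_{ex} = \mu_x$; and $\mu_{x^{-1}}$ is a two-sided inverse of $\mu_x$ because $\mu_{x^{-1}} \circ \mu_x = \mu_{x^{-1}x} = \mu_e = \mu_{xx^{-1}} = \mu_x \circ \mu_{x^{-1}}$. The subtle point throughout is the backward direction of the coset criterion, where the medial law, Lemma \ref{a}, and Proposition \ref{b-1} must be deployed in the correct order; once that is settled, the AG-group structure on $G/\mu$ is essentially pulled back from $G$ term by term.
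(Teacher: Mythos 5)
Your proof is correct, but it reaches well-definedness by a genuinely different route than the paper. The paper argues directly: it expands $\mu(g\cdot x^{-1}y^{-1})$ through a chain of left-invertive-law and $a(bc)=b(ac)$ manipulations to obtain the lower bound $\mu\bigl((x^{-1}y^{-1})(x_{\circ}y_{\circ})\bigr)\wedge\mu\bigl(g(x_{\circ}^{-1}y_{\circ}^{-1})\bigr)$, then shows the first factor equals $\mu(e)$ by substituting carefully chosen elements into the pointwise hypotheses $\mu(gx^{-1})=\mu(gx_{\circ}^{-1})$ and $\mu(gy^{-1})=\mu(gy_{\circ}^{-1})$, and finally runs the symmetric argument for the reverse inequality. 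You instead first isolate the clean criterion $\mu_{a}=\mu_{b}\Leftrightarrow\mu(ab^{-1})=\mu(e)$ --- which is essentially the content the paper only extracts later, inside Theorem \ref{c}, via the level set $\mu_{*}$ --- and then well-definedness collapses to the one-line medial-law computation $(ac)(b^{-1}d^{-1})=(ab^{-1})(cd^{-1})$ sandwiched between the fuzzy-subgroup inequality and Lemma \ref{ad}(i). Your version is more modular and noticeably shorter, at the cost of invoking the medial law and $(bd)^{-1}=b^{-1}d^{-1}$, both standard consequences of the left invertive law that deserve a line of justification. The remaining axioms (left invertive law, left identity $\mu_{e}$, inverses $\mu_{x^{-1}}$) are handled identically in both proofs, and, as in the paper, your argument never actually uses normality of $\mu$, only $\mu\in F(G)$.
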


\noindent \begin {proof} First we show that the composition of cosets
is well defined. Let $x,y,x_{\circ},y_{\circ}\in G$ such that $\mu_{x}=\mu_{x_{\circ}}$and
$\mu_{y}=\mu_{y_{\circ}}$.

\noindent We show that $\mu_{x}\circ\mu_{y}=\mu_{x_{\circ}}\circ\mu_{y_{\circ}}$,
i.e. $\mu_{xy}=\mu_{x_{\circ}y_{\circ}}$. Thus by (\ref{eq:1a-1})
\begin{eqnarray*}
\mu_{xy}(g) & = & \mu(g(xy)^{-1})=\mu(g\cdot x^{-1}y^{-1})\quad\forall\, g\in G;\,\mbox{and}\\
\mu_{x_{\circ}y_{\circ}}(g) & = & \mu(g(x_{\circ}y_{\circ})^{-1})=\mu(g\cdot x_{\circ}^{-1}y_{\circ}^{-1})\quad\forall\, g\in G.
\end{eqnarray*}
Now $\forall\, x,y\in G$,
\begin{eqnarray*}
\mu(g\cdot x^{-1}y^{-1}) & = & \mu[e(g\cdot x^{-1}y^{-1})]\\
 & = & \mu[((x_{\circ}y_{\circ})^{-1}(x_{\circ}y_{\circ}))(g\cdot x^{-1}y^{-1})]\\
 & = & \mu[((x_{\circ}^{-1}y_{\circ}^{-1})(x_{\circ}y_{\circ}))(g\cdot x^{-1}y^{-1})]\\
 & = & \mu[g(((x_{\circ}^{-1}y_{\circ}^{-1})(x_{\circ}y_{\circ}))(x^{-1}y^{-1}))]\mbox{\,\,[in \ensuremath{G}; \ensuremath{a(bc)=b(ac)\,}[8]]}\\
 & = & \mu[g(((x^{-1}y^{-1})(x_{\circ}y_{\circ}))(x_{\circ}^{-1}y_{\circ}^{-1}))]\,\,[\mbox{using left invertive law]}\\
 & = & \mu[((x^{-1}y^{-1})(x_{\circ}y_{\circ}))(g(x_{\circ}^{-1}y_{\circ}^{-1}))]\mbox{\,\,[in \ensuremath{G;}\ensuremath{\, a(bc)=b(ac)\,}[8]]}\\
 & \geq & \mu((x^{-1}y^{-1})(x_{\circ}y_{\circ}))\wedge\mu(g(x_{\circ}^{-1}y_{\circ}^{-1}))
\end{eqnarray*}
\begin{eqnarray}
\Rightarrow\mu(g\cdot x^{-1}y^{-1}) & \geq & \mu((x^{-1}y^{-1})(x_{\circ}y_{\circ}))\wedge\mu(g(x_{\circ}^{-1}y_{\circ}^{-1}))\label{p}
\end{eqnarray}

\noindent Now we show that $\mu((x^{-1}y^{-1})(x_{\circ}y_{\circ}))=\mu(e)$
in (\ref{p}) . Let $\mu_{x}=\mu_{x_{\circ}}\Rightarrow\mu_{x}(g)=\mu_{x_{\circ}}(g)\,\,\forall\, g\in G$,
\begin{eqnarray}
\Rightarrow\mu(gx^{-1}) & = & \mu(gx_{\circ}^{-1}).\qquad[\mbox{by }(\ref{eq:1a-1})]\label{eq:2a}
\end{eqnarray}
Similarly, since $\mu_{y}=\mu_{y_{\circ}}\Rightarrow\mu_{y}(g)=\mu_{y_{\circ}}(g)\,\,\forall\, g\in G,$
\begin{eqnarray}
\Rightarrow\mu(gy^{-1}) & = & \mu(gy_{\circ}^{-1}).\qquad[\mbox{by }(\ref{eq:1a-1})]\label{eq:3a}
\end{eqnarray}
Now,
\begin{eqnarray*}
\mu((x^{-1}y^{-1})(x_{\circ}y_{\circ})) & = & \mu((x_{\circ}y_{\circ}\cdot y^{-1})x^{-1})\,\,[\mbox{using left invertive law]}\\
 & = & \mu((x_{\circ}y_{\circ}\cdot y^{-1})x_{\circ}^{-1})\\
 &  & [\mbox{substituting \,\ensuremath{g}\,\ by }(x_{\circ}y_{\circ}\cdot y^{-1})\mbox{ in }(\ref{eq:2a})]\\
 & = & \mu(x_{\circ}(y_{\circ}y^{-1}\cdot x_{\circ}^{-1}))\,\,\mbox{[in \ensuremath{G}; \ensuremath{(ab\cdot c)d=a(bc\cdot d)\,}[8]]}\\
 & = & \mu((y_{\circ}y^{-1})(x_{\circ}x_{\circ}^{-1}))\mbox{\,\,[in \ensuremath{G}; \ensuremath{\, a(bc)=b(ac)\,}[8]]}\\
 & = & \mu(y_{\circ}y^{-1}\cdot e)=\mu(ey^{-1}\cdot y_{\circ})\,\,[\mbox{using left invertive law]}\\
 & = & \mu(y^{-1}y_{\circ})\\
 & = & \mu(y_{\circ}y^{-1})\qquad\qquad\,\,[\mbox{by Proposition }\ref{b-1}]\\
 & = & \mu(y_{\circ}y_{\circ}^{-1})\qquad\qquad\,\,[\mbox{substituting \ensuremath{\, g\,}by }y_{\circ}\mbox{ in }(\ref{eq:3a})]\\
 & = & \mu(e).
\end{eqnarray*}
Therefore, (\ref{p}) implies that $\mu(g\cdot x^{-1}y^{-1})\geq\mu(g\cdot x_{\circ}^{-1}y_{\circ}^{-1})$.
Similarly one can prove that $\mu(g\cdot x_{\circ}^{-1}y_{\circ}^{-1})\geq\mu(g\cdot x^{-1}y^{-1})$.
Consequently,
\begin{eqnarray*}
\mu(g\cdot x^{-1}y^{-1}) & = & \mu(g\cdot x_{\circ}^{-1}y_{\circ}^{-1})\\
\Rightarrow\mu(g\cdot(xy)^{-1}) & = & \mu(g\cdot(x_{\circ}y_{\circ})^{-1})\\
\Rightarrow\mu_{xy}(g) & = & \mu_{x_{\circ}y_{\circ}}(g)\,\,\forall\, g\in G\\
\Rightarrow\mu_{xy} & = & \mu_{x_{\circ}y_{\circ}}.
\end{eqnarray*}
Hence the product of cosets is well-defined. Now we show that $G\diagup\mu$
form an AG-group under the operation $\circ$.

\noindent $G\diagup\mu$ is closed under the operation $\circ$. Also
$G\diagup\mu$ satisfies left invertive law under $\circ$ ; since
$(\mu_{x}\circ\mu_{y})\circ\mu_{z}=\mu_{xy}\circ\mu_{z}=\mu_{xy\cdot z}=\mu_{zy\cdot x}=\mu_{zy}\circ\mu_{x}=(\mu_{z}\circ\mu_{y})\circ\mu_{x}$
$\forall\, x,y,z\in G$. Now for any $x\in G$, $(\mu_{e}\circ\mu_{x})(g)=(\mu_{ex})(g)=(\mu_{x})(g)\Rightarrow(\mu_{e}\circ\mu_{x})=\mu_{x}\,\,\forall\, g\,\in G$,
but $(\mu_{x}\circ\mu_{e})(g)=(\mu_{xe})(g)\neq(\mu_{x})(g)\Rightarrow(\mu_{x}\circ\mu_{e})\neq\mu_{x}\,\,\forall g\,\in G$.
This implies that $\mu_{e}$ is the left identity of $G\diagup\mu$.
As an AG-group $G$ is non associative therefore, $(\mu_{x}\circ\mu_{y})\circ\mu_{z}\neq\mu_{x}\circ(\mu_{y}\circ\mu_{z})$.
Finally, $\forall\, x\in G$, once $(\mu_{x}\circ\mu_{x^{-1}})(g)=(\mu_{xx^{-1}})(g)=(\mu_{e})(g)\Rightarrow\mu_{x}\circ\mu_{x^{-1}}=\mu_{e}\,\,\forall\, g\in G$,
and $(\mu_{x^{-1}}\circ\mu_{x})(g)=(\mu_{x^{-1}x})(g)=(\mu_{e})(g)\Rightarrow\mu_{x^{-1}}\circ\mu_{x}=\mu_{e}\,\,\forall\, g\in G$
the inverse of each $\mu_{x}$ exists and is $\mu_{x^{-1}}$. Hence
it follows that $G\diagup\mu$\emph{ }is an AG-subgroup. \end {proof}

\noindent \begin {remark} The AG-group $G\diagup\mu$ defined in
Theorem \ref{s} is called quotient AG-group of $G$ relative to the
normal fuzzy AG-subgroup $\mu$.

\noindent \end {remark}

\noindent \begin {theorem} \label{FC}Let $\nu\in F(G)$ and $H$
be any AG-subgroup of $G$. Define $\xi\in FP(G\diagup H)$ as follows:
\begin{eqnarray*}
\xi(Hx) & = & \vee\{\nu(z)\,:\, z\in Hx\}\,\,\,\forall x\in G.
\end{eqnarray*}
Then $\xi\in F(G\diagup H)$. \end {theorem}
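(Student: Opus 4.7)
The plan is to verify the two defining axioms of a fuzzy AG-subgroup for $\xi$ on the quotient AG-group $G/H$, namely $\xi(Hx\cdot Hy)\geq\xi(Hx)\wedge\xi(Hy)$ and $\xi((Hx)^{-1})\geq\xi(Hx)$, under the coset operations $Hx\cdot Hy=H(xy)$ and $(Hx)^{-1}=Hx^{-1}$. Recall that, as noted in the paragraph preceding Theorem \ref{s}, $G/H$ is an AG-group for any AG-subgroup $H$, so these operations on cosets are well-defined without any normality hypothesis.

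For the first axiom, the key algebraic input is the medial law $(pq)(rs)=(pr)(qs)$, which holds in every AG-groupoid by applying the left invertive law twice. Given any $a\in Hx$ and $b\in Hy$, I would write $a=h_1 x$ and $b=h_2 y$ with $h_1,h_2\in H$, so that medial yields $ab=(h_1 h_2)(xy)\in H(xy)$ since $H$ is closed under multiplication. Because $\nu\in F(G)$ satisfies $\nu(ab)\geq\nu(a)\wedge\nu(b)$, we get $\xi(H(xy))\geq\nu(a)\wedge\nu(b)$ for every such pair. Taking the supremum over $a\in Hx$ and $b\in Hy$ and using the identity $(\vee_a\alpha_a)\wedge(\vee_b\beta_b)=\vee_{a,b}(\alpha_a\wedge\beta_b)$, valid in the totally ordered lattice $[0,1]$, gives $\xi(H(xy))\geq\xi(Hx)\wedge\xi(Hy)$.

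For the second axiom, I would use the fact that $(ab)^{-1}=a^{-1}b^{-1}$ in every AG-group; this follows at once from medial, since $(ab)(a^{-1}b^{-1})=(aa^{-1})(bb^{-1})=e\cdot e=e$ and symmetrically $(a^{-1}b^{-1})(ab)=e$. Hence if $a=hx\in Hx$ then $a^{-1}=h^{-1}x^{-1}\in Hx^{-1}$, so inversion maps $Hx$ into $Hx^{-1}$. Combined with Lemma \ref{ad}(ii), which gives $\nu(a)=\nu(a^{-1})$, this yields $\xi(Hx^{-1})\geq\vee\{\nu(a^{-1}):a\in Hx\}=\vee\{\nu(a):a\in Hx\}=\xi(Hx)$.

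I expect the main obstacle to be identifying the medial-law step that delivers $ab\in H(xy)$ and $a^{-1}\in Hx^{-1}$; once these two coset-level closure facts are in hand, the rest of the proof consists of routine supremum manipulations in $[0,1]$ together with an invocation of Lemma \ref{ad}(ii). Notably, no normality assumption on $H$ or on $\nu$ is needed anywhere in the argument, which is consistent with the hypotheses of the theorem.
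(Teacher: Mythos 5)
Your proposal is correct and follows essentially the same route as the paper: both verify the two fuzzy AG-subgroup axioms for $\xi$ by comparing suprema over cosets, using $\nu(uv)\geq\nu(u)\wedge\nu(v)$, the distributivity of $\vee$ over $\wedge$ in $[0,1]$, and Lemma \ref{ad}(ii). The only difference is that you explicitly justify the coset-level closure facts ($uv\in H(xy)$ via the medial law and $(hx)^{-1}=h^{-1}x^{-1}\in Hx^{-1}$) which the paper's proof uses implicitly when it rewrites $\{z\in H(xy)\}$ as $\{uv: u\in Hx,\ v\in Hy\}$.
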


\noindent \begin {proof} Since $\forall\, x,y\in G$,
\begin{eqnarray*}
\xi(HxHy)=\xi(H(xy)) & = & \vee\{\nu(z)\,:\, z\in H(xy)\}\\
 & = & \vee\{\nu(uv)\,:\, u\in Hx,\, v\in Hy\}\\
 & \geq & \vee\{\nu(u)\wedge\nu(v)\,:\, u\in Hx,\, v\in Hy\}\\
 & = & (\vee\{\nu(u)\,:\, u\in Hx\})\wedge(\vee\{\nu(v)\,:\, v\in Hy\})\\
 & = & \xi(Hx)\wedge\xi(Hy)
\end{eqnarray*}
and $\forall\, x\in G$,
\begin{eqnarray*}
\xi(Hx)^{-1}=\xi(Hx^{-1}) & = & \vee\{\nu(z)\,:\, z\in Hx^{-1}\}\qquad\qquad\qquad\qquad\\
 & = & \vee\{\nu(w^{-1})\,:\, w^{-1}\in Hx^{-1}\}\qquad\qquad\qquad\\
 & \geq & \vee\{\nu(w)\,:\, w\in Hx\}\qquad\qquad\qquad\\
 & = & \xi(Hx).
\end{eqnarray*}
Hence $\xi\in F(G\diagup H)$. \end {proof}

\noindent \begin {remark} The fuzzy AG-subgroup defined in Theorem
\ref{FC} is called Quotient fuzzy AG-subgroup or factor fuzzy AG-subgroup
of $G$, and is denoted by $\nu\diagup H$.

\noindent \end {remark}

\noindent \begin {theorem} \label{c}Let $\mu\in\mathcal{F}(G)$.
Then $\mu_{x}=\mu_{y}\Leftrightarrow\underset{x}{\mu_{*}}=\underset{y}{\mu_{*}}\,\,\forall\, x,y\in G$.\end {theorem}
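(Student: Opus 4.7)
The plan is to reduce the equality of the fuzzy cosets $\mu_x=\mu_y$ to the single scalar condition $\mu(xy^{-1})=\mu(e)$, i.e.\ to the membership $xy^{-1}\in\mu_{*}$, and then to observe that this is exactly the condition that characterises $\underset{x}{\mu_{*}}=\underset{y}{\mu_{*}}$. The main tools will be Proposition \ref{b-1} (giving the swap $\mu(ab)=\mu(ba)$), Lemma \ref{a} (saying that left-multiplication by any element of $\mu_{*}$ leaves $\mu$-values unchanged), and the AG-group identities themselves, namely the left invertive law $(ab)c=(cb)a$ and the medial law $(ab)(cd)=(ac)(bd)$.

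For the implication $(\Rightarrow)$ I would start from $\mu_x=\mu_y$ and evaluate both sides at $g=x$:
\[
\mu(e)=\mu(xx^{-1})=\mu_x(x)=\mu_y(x)=\mu(xy^{-1}),
\]
so $xy^{-1}\in\mu_{*}$. The left invertive law then gives $(xy^{-1})y=(y\cdot y^{-1})x=x$, which places $x$ simultaneously in $\underset{x}{\mu_{*}}$ (trivially, since $e\in\mu_{*}$) and in $\underset{y}{\mu_{*}}$; because $\mu_{*}$ is itself an AG-subgroup, its cosets partition $G$ and the two must coincide. If one prefers to avoid invoking partitioning, the same $xy^{-1}\in\mu_{*}$ lets one show $\underset{x}{\mu_{*}}\subseteq\underset{y}{\mu_{*}}$ element-wise: for $z=hx$ with $h\in\mu_{*}$, the left invertive law gives $zy^{-1}=(hx)y^{-1}=(y^{-1}x)h$, and since $\mu(y^{-1}x)=\mu(xy^{-1})=\mu(e)$ by Proposition \ref{b-1}, Lemma \ref{a} forces $\mu(zy^{-1})=\mu(h)=\mu(e)$, so $z\in\underset{y}{\mu_{*}}$ (the reverse inclusion being symmetric).

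For $(\Leftarrow)$, from $\underset{x}{\mu_{*}}=\underset{y}{\mu_{*}}$ I would extract some $h\in\mu_{*}$ with $x=hy$ and run the same left invertive calculation as above to obtain $xy^{-1}=h\in\mu_{*}$, i.e.\ $\mu(xy^{-1})=\mu(e)$. The decisive step is then the identity
\[
gy^{-1}=(gy^{-1})(x^{-1}x)=(gx^{-1})(y^{-1}x)
\]
produced by the medial law for any $g\in G$. Proposition \ref{b-1} swaps the two factors on the right, and Lemma \ref{a}, applied to $y^{-1}x$ (whose $\mu$-value equals $\mu(e)$, again by Proposition \ref{b-1}), collapses what remains to $\mu(gx^{-1})$. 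Hence $\mu_y(g)=\mu_x(g)$ for every $g$, and so $\mu_x=\mu_y$.

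I expect the main obstacle to be finding the right way to ``insert $xy^{-1}$'' between $gx^{-1}$ and $gy^{-1}$ without invoking associativity; once the medial-law rewrite $gy^{-1}=(gx^{-1})(y^{-1}x)$ is in hand, the remainder is a routine combination of Proposition \ref{b-1} and Lemma \ref{a}. The rest of the work — turning $xy^{-1}\in\mu_{*}$ into the coset equality $\underset{x}{\mu_{*}}=\underset{y}{\mu_{*}}$ and vice versa — is a direct application of the left invertive identity $(ab)b^{-1}=a$.
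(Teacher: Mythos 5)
Your overall strategy coincides with the paper's --- reduce both sides of the equivalence to the single condition $xy^{-1}\in\mu_{*}$ --- and your forward direction (evaluate at $g=x$ and $g=y$, then use the left invertive law to move $xy^{-1}$, resp.\ $yx^{-1}$, into coset membership) is essentially the paper's argument. The problem lies in what you yourself call the decisive step of the converse. The identity $gy^{-1}=(gy^{-1})(x^{-1}x)$ is false in a general AG-group: $x^{-1}x=e$ is only a \emph{left} identity, so $(gy^{-1})e\neq gy^{-1}$ in general (in the paper's order-$4$ example, $1\cdot 0=3\neq1$). Consequently the rewrite $gy^{-1}=(gx^{-1})(y^{-1}x)$ on which you rest the whole converse is also false as an identity of elements of $G$; the medial law only yields $(gx^{-1})(y^{-1}x)=(gy^{-1})e$.

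The gap is local and can be repaired in two ways. First, insert $e$ on the side where it actually acts as an identity: $gy^{-1}=(x^{-1}x)(gy^{-1})=(x^{-1}g)(xy^{-1})$ by the medial law, after which Proposition \ref{b-1} and Lemma \ref{a} (with $\mu(xy^{-1})=\mu(e)$) give $\mu(gy^{-1})=\mu(x^{-1}g)=\mu(gx^{-1})$ as desired. Second, and more in the spirit of what you wrote, note that you only ever need $\mu$-values, and $\mu((gy^{-1})e)=\mu(e(gy^{-1}))=\mu(gy^{-1})$ by Proposition \ref{b-1}, so the chain $\mu(gy^{-1})=\mu((gx^{-1})(y^{-1}x))$ survives even though the underlying element identity does not. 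For comparison, the paper avoids the issue by expanding $x^{-1}=(y^{-1}y)x^{-1}$ inside $\mu(gx^{-1})$ and then using the left invertive law together with the identity $a(bc)=b(ac)$ rather than the medial law; once corrected, your medial-law route is a legitimate and slightly shorter alternative, and your derivation of $xy^{-1}\in\mu_{*}$ from $x=hy$ is cleaner than the paper's formal multiplication of cosets.
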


\noindent \begin {proof} Let $\mu_{x}=\mu_{y}$, then
\begin{eqnarray}
\mu_{x}(g) & = & \mu_{y}(g)\,\,\forall\, g\in G\nonumber \\
\Rightarrow\mu(gx^{-1}) & = & \mu(gy^{-1})\,\,\forall\, g\in G\quad[\mbox{using (\ref{eq:1a-1})}]\label{eq:1a}
\end{eqnarray}
put $g=y$, in (\ref{eq:1a}) we get $\mu(yx^{-1})=\mu(e)$ $\Rightarrow yx^{-1}\in\mu_{*}.\,\,\,[\mbox{by (\ref{a1})}]$
$\Rightarrow$$(yx^{-1})x\in\underset{x}{\mu_{*}}\Rightarrow(xx^{-1})y\in\underset{x}{\mu_{*}}$
$(\mbox{using left invertive law})$ $\Rightarrow y\in\underset{x}{\mu_{*}}$,
but $y\in\underset{y}{\mu_{*}}$, therefore, $\underset{y}{\mu_{*}}\subseteq\underset{x}{\mu_{*}}$.

\noindent Again put $g=x$, in (\ref{eq:1a}) we get $\mu(xx^{-1})=\mu(xy^{-1})\Rightarrow\mu(xy^{-1})=\mu(e)\Rightarrow xy^{-1}\in\mu_{*}.\,\,\,[\mbox{by (\ref{a1})}]\Rightarrow(xy^{-1})y\in\underset{y}{\mu_{*}}\Rightarrow x\in\underset{y}{\mu_{*}}$,
but $x\in\underset{x}{\mu_{*}}$. This implies that, $\underset{x}{\mu_{*}}\subseteq\underset{y}{\mu_{*}}$.
Thus $\underset{x}{\mu_{*}}\subseteq\underset{y}{\mu_{*}}\subseteq\underset{x}{\mu_{*}}$.
Hence $\underset{x}{\mu_{*}}=\underset{y}{\mu_{*}}.$

\noindent Conversely; let $\underset{x}{\mu_{*}}=\underset{y}{\mu_{*}}\Rightarrow\underset{x}{\mu_{*}}\circ\underset{y^{-1}}{\mu_{*}}=\underset{y}{\mu_{*}}\circ\underset{y^{-1}}{\mu_{*}}\Rightarrow\underset{xy^{-1}}{\mu_{*}}=\underset{yy^{-1}}{\mu_{*}}\Rightarrow\underset{xy^{-1}}{\mu_{*}}=\underset{e}{\mu_{*}}=\mu_{*}\Rightarrow xy^{-1}\in\mu_{*}$.
Now for any $x,y\in G$ it follows that
\begin{eqnarray*}
\mu(gx^{-1}) & = & \mu(g((y^{-1}y)x^{-1}))\\
 & = & \mu(g((x^{-1}y)y^{-1}))\qquad[\mbox{using left invertive law]}\\
 & = & \mu((x^{-1}y)(gy^{-1}))\qquad\mbox{[in \ensuremath{G}; \ensuremath{a(bc)=b(ac)}\,[8]]}\\
 & \geq & \mu(x^{-1}y)\wedge\mu(gy^{-1})\quad[\mu\in\mathcal{F}(G)]\\
 & = & \mu((xy^{-1})^{-1})\wedge\mu(gy^{-1})\\
 & = & \mu(xy^{-1})\wedge\mu(gy^{-1})\quad\,[\mbox{by Lemma \ref{ad}-(ii)}]\\
 & = & \mu(e)\wedge\mu(gy^{-1})\qquad\quad[\mbox{by }(\ref{a1});\mbox{ as }xy^{-1}\in\mu_{*}]\\
 & = & \mu(gy^{-1})\qquad\qquad\quad\quad[\mbox{by Lemma \ref{ad}-(i)}]
\end{eqnarray*}
This implies that $\mu(gx^{-1})\geq\mu(gy^{-1})$.

\noindent By similar arrangements we can show that, $\mu(gy^{-1})\geq\mu(gx^{-1})$.
Consequently $\mu(gx^{-1})=\mu(gy^{-1})\Rightarrow\mu_{x}(g)=\mu_{y}(g)\,\,\forall\, g\in G$
by (\ref{eq:1a-1}). Hence $\mu_{x}=\mu_{y}$. \end {proof} 

\noindent \begin {theorem} \label{aa}Let $\mu\in NF(G)$ and $\mu_{x}=\mu_{y}$,
then $\mu(x)=\mu(y)$ $\forall\, x,y\in G$. \end {theorem}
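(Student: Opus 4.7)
The plan is to exploit the definition of fuzzy cosets directly: from $\mu_x=\mu_y$ we have $\mu(gx^{-1})=\mu(gy^{-1})$ for every $g\in G$, and a single well-chosen substitution will reduce the problem to an already-proved lemma.

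First I would substitute $g=x$ in the identity $\mu(gx^{-1})=\mu(gy^{-1})$. This yields $\mu(xx^{-1})=\mu(xy^{-1})$, i.e.\ $\mu(e)=\mu(xy^{-1})$. In the language of~(\ref{a1}), this says $xy^{-1}\in\mu_{*}$. Geometrically this is just the statement that when the cosets $\mu_x$ and $\mu_y$ coincide, the element $xy^{-1}$ lies in the ``kernel'' $\mu_{*}$; no appeal to normality is needed to extract it.

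Next I would invoke Lemma~\ref{a} with the role of its ``$x$'' played by $xy^{-1}$: since $\mu(xy^{-1})=\mu(e)$, the lemma gives $\mu((xy^{-1})\,w)=\mu(w)$ for every $w\in G$. Choosing $w=y$ produces $\mu((xy^{-1})\,y)=\mu(y)$. The remaining step is a one-line AG-calculation using the left invertive law $(ab)c=(cb)a$:
\[
(xy^{-1})\,y \;=\; (y\,y^{-1})\,x \;=\; e\,x \;=\; x,
\]
so $\mu(x)=\mu((xy^{-1})y)=\mu(y)$, as required.

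I do not anticipate a serious obstacle here; the only mild subtlety is recognizing which substitution to make in the coset equality, and noticing that the left invertive law collapses $(xy^{-1})y$ to $x$ so that Lemma~\ref{a} can close the argument. It is worth remarking that the normality hypothesis $\mu\in NF(G)$ is not actually invoked in the proof sketched above: the conclusion already follows from $\mu\in F(G)$ together with Lemma~\ref{a} and the left invertive law. The hypothesis $\mu\in NF(G)$ is therefore stronger than necessary, but under it the stated conclusion certainly holds.
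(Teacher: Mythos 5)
Your proof is correct, but it takes a genuinely different route from the paper's. The paper first invokes Theorem~\ref{c} to translate $\mu_{x}=\mu_{y}$ into the equality of crisp cosets $\underset{x}{\mu_{*}}=\underset{y}{\mu_{*}}$, manipulates these cosets to extract $xy^{-1}\in\mu_{*}$, and then uses normality in an essential way: it writes $\mu(y)=\mu(y^{-1})=\mu(x^{-1}y^{-1}\cdot(x^{-1})^{-1})$, applies the left invertive law and the fuzzy AG-subgroup inequality to get $\mu(y)\geq\mu(e)\wedge\mu(x)=\mu(x)$, and then repeats the argument symmetrically to obtain the reverse inequality. You instead obtain $xy^{-1}\in\mu_{*}$ in one line by substituting $g=x$ into $\mu(gx^{-1})=\mu(gy^{-1})$ (the same substitution the paper performs inside its proof of Theorem~\ref{c}), and then close immediately with Lemma~\ref{a}: since $\mu(xy^{-1})=\mu(e)$, the lemma gives $\mu((xy^{-1})y)=\mu(y)$, and the left invertive law collapses $(xy^{-1})y=(yy^{-1})x=ex=x$. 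Your route is shorter, avoids the two-sided inequality chase, and---as you correctly observe---never uses normality, so it proves the stronger statement that the conclusion holds for every $\mu\in F(G)$; the paper's hypothesis $\mu\in NF(G)$ is needed only for its particular argument, not for the result. The one thing the paper's version buys is that it exercises the coset machinery ($\underset{x}{\mu_{*}}\circ\underset{y}{\mu_{*}}=\underset{xy}{\mu_{*}}$, Theorem~\ref{c}) that the subsequent results on $G\diagup\mu$ rely on, whereas your argument is self-contained modulo Lemma~\ref{a}.
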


\noindent \begin {proof} Let $x,y\in G$, then $\mu_{x}=\mu_{y}\Leftrightarrow\underset{x}{\mu_{*}}=\underset{y}{\mu_{*}}\Rightarrow\underset{x}{\mu_{*}}\circ\underset{y^{-1}}{\mu_{*}}=\underset{y}{\mu_{*}}\circ\underset{y^{-1}}{\mu_{*}}\Rightarrow\underset{xy^{-1}}{\mu_{*}}=\underset{yy^{-1}}{\mu_{*}}\Rightarrow\underset{xy^{-1}}{\mu_{*}}=\underset{e}{\mu_{*}}=\mu_{*}\Rightarrow xy^{-1}\in\mu_{*}$;
(using Theorem \ref{c} and the definition of fuzzy cosets). Therefore,
\begin{eqnarray*}
\mu(y)=\mu(y^{-1}) & = & \mu(x^{-1}y^{-1}\cdot(x^{-1})^{-1})\quad[\mu\in NF(G)]\\
 & = & \mu(x^{-1}y^{-1}\cdot x)\\
 & = & \mu(xy^{-1}\cdot x^{-1})\qquad[\mbox{using left invertive law]}\\
 & \geq & \mu(xy^{-1})\wedge\mu(x^{-1})\\
 & = & \mu(e)\wedge\mu(x)\qquad\,\,\,\,[\mbox{ by }(\ref{a1}),\mbox{ as }xy^{-1}\in\mu_{*}]\\
 & = & \mu(x)\qquad\qquad\quad\,\,\,\,[\mbox{by Lemma \ref{ad}-(i)}]\\
\Rightarrow\mu(y) & \geq & \mu(x).
\end{eqnarray*}

\noindent Similarly, we can show that $\mu(x)\geq\mu(y)$. This implies
that $\mu(x)\geq\mu(y)\geq\mu(x).$ Hence $\mu(x)=\mu(y)$. \end {proof}

\noindent \begin {proposition} Let $\mu\in NF(G)$. Then $\mu_{x}(xg)=\mu_{x}(gx)=\mu(g)\,\,\forall\, g\in G$.\end {proposition}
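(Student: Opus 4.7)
The plan is to unwind the definition of the fuzzy coset $\mu_x$ and then apply the two key tools separately to the two equalities: normality of $\mu$ on one side, and the left invertive law on the other.

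First I would recall from Definition \ref{b} that $\mu_x(g)=\mu(gx^{-1})$ for every $g\in G$. Applied to the arguments $xg$ and $gx$ this gives
\begin{eqnarray*}
\mu_x(xg) & = & \mu\bigl((xg)x^{-1}\bigr), \\
\mu_x(gx) & = & \mu\bigl((gx)x^{-1}\bigr).
\end{eqnarray*}
So the whole proposition reduces to showing $\mu\bigl((xg)x^{-1}\bigr)=\mu(g)$ and $\mu\bigl((gx)x^{-1}\bigr)=\mu(g)$ for all $x,g\in G$.

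For the first equality I would invoke normality of $\mu$ directly: since $\mu\in NF(G)$, by definition $\mu(xy\cdot x^{-1})=\mu(y)$ for all $x,y\in G$, so taking $y=g$ yields $\mu(xg\cdot x^{-1})=\mu(g)$ immediately. For the second equality I would not need normality at all; instead I would simplify the product $(gx)x^{-1}$ in $G$ using the left invertive law $(ab)c=(cb)a$ and the fact that $x^{-1}x=e$ is the left identity, namely
\[
(gx)x^{-1} \;=\; (x^{-1}x)g \;=\; eg \;=\; g,
\]
from which $\mu\bigl((gx)x^{-1}\bigr)=\mu(g)$ follows at once.

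Combining the two computations gives $\mu_x(xg)=\mu(g)=\mu_x(gx)$ as required. I do not anticipate a substantive obstacle here: the result is a short consequence of the definitions and the left invertive law. The only minor point to be careful about is the direction in which the left invertive law is applied when collapsing $(gx)x^{-1}$, so as to correctly land on $(x^{-1}x)g$ rather than an expression that still involves $x$.
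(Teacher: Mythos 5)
Your proposal is correct and follows essentially the same route as the paper: the first equality $\mu_x(xg)=\mu(xg\cdot x^{-1})=\mu(g)$ is obtained directly from normality, and the second collapses $(gx)x^{-1}$ to $(x^{-1}x)g=eg=g$ via the left invertive law, exactly as in the paper's proof. No gaps.
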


\noindent \begin {proof} Using definition of cosets of fuzzy AG-subgroup,
it follows that for $g\in G$; $\mu_{x}(xg)=\mu(xg\cdot x^{-1})=\mu(g)$.
And $\mu_{x}(gx)=\mu(gx\cdot x^{-1})=\mu(x^{-1}x\cdot g)=\mu(eg)=\mu(g)$.
Hence $\mu_{x}(xg)=\mu_{x}(gx)=\mu(g)\,\,\forall\, g\in G$. \end {proof}

\noindent \begin {theorem}\label{d}Let $\mu\in NF(G)$. Then the
following assertions hold:

\begin{enumerate}[(i)]

\item $G\diagup\mu\cong G\diagup\mu_{*}$;

\item If\emph{ }$\nu\in FP(G\diagup\mu)$; defined by $\nu(\mu_{x})=\mu(x)\,\,\forall\, x\in G$.
Then $\nu\in NF(G\diagup\mu)$.

\end{enumerate}

\noindent \end {theorem}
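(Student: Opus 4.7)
The plan is to address the two parts separately, both leveraging the results already established in Theorems \ref{c} and \ref{aa}, together with the quotient AG-group operation $\mu_x \circ \mu_y = \mu_{xy}$ from Theorem \ref{s}.

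For part (i), I would construct an explicit map $\phi : G\diagup\mu_* \to G\diagup\mu$ by setting $\phi(\underset{x}{\mu_*}) = \mu_x$ for every $x \in G$. Theorem \ref{c} states that $\mu_x = \mu_y$ iff $\underset{x}{\mu_*} = \underset{y}{\mu_*}$, which simultaneously gives both the well-definedness and the injectivity of $\phi$. Surjectivity is immediate from the definition of $G\diagup\mu$, since every element is of the form $\mu_x$. The AG-group homomorphism property then reduces to
\[
\phi(\underset{x}{\mu_*} \circ \underset{y}{\mu_*}) = \phi(\underset{xy}{\mu_*}) = \mu_{xy} = \mu_x \circ \mu_y = \phi(\underset{x}{\mu_*}) \circ \phi(\underset{y}{\mu_*}),
\]
which uses the coset multiplication rules on both sides.

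For part (ii), the first step is to verify that $\nu$ is well-defined, i.e.\ that $\mu_x = \mu_y$ forces $\mu(x) = \mu(y)$; this is precisely the content of Theorem \ref{aa}, and it uses exactly the hypothesis $\mu \in NF(G)$ that we are given. Once well-definedness is settled, the three defining conditions of a normal fuzzy AG-subgroup on $G\diagup\mu$ translate directly to known conditions on $\mu$: the product inequality $\nu(\mu_x \circ \mu_y) = \nu(\mu_{xy}) = \mu(xy) \geq \mu(x) \wedge \mu(y) = \nu(\mu_x) \wedge \nu(\mu_y)$ uses $\mu \in F(G)$; the inverse condition $\nu(\mu_x^{-1}) = \nu(\mu_{x^{-1}}) = \mu(x^{-1}) = \mu(x) = \nu(\mu_x)$ uses Lemma \ref{ad}(ii); and the normality identity
\[
\nu((\mu_x \circ \mu_y) \circ \mu_x^{-1}) = \nu(\mu_{xy \cdot x^{-1}}) = \mu(xy \cdot x^{-1}) = \mu(y) = \nu(\mu_y)
\]
follows immediately from $\mu \in NF(G)$.

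The main obstacle in both parts is the well-definedness check, but Theorems \ref{c} and \ref{aa} are tailored precisely to overcome it. After that, the arguments are essentially a translation of the already-established AG-group structure on $G\diagup\mu$ back to $\mu$ itself via the correspondence $x \leftrightarrow \mu_x$, so no new left invertive or medial-type identity has to be invoked beyond those already used to set up $G\diagup\mu$ and $G\diagup\mu_*$.
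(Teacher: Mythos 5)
Your proposal is correct and follows essentially the same route as the paper: the isomorphism in (i) is the map $\mu_x \leftrightarrow \underset{x}{\mu_*}$ justified by Theorem \ref{c} and the coset multiplication rules, and (ii) is the direct verification of the three conditions using $\mu\in NF(G)$. Your explicit well-definedness check for $\nu$ via Theorem \ref{aa} is a point the paper's proof leaves implicit, but it does not change the argument.
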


\noindent \begin {proof}  As both $G\diagup\mu$ and $G\diagup\mu_{*}$
are AG-groups by Theorem \ref{s} and $\phi:G\diagup\mu\rightarrow G\diagup\mu_{*}$
given by $\phi(\mu_{x})=\underset{x}{\mu_{*}}\,\,\forall\, x\in G$
is an isomorphism by Theorem \ref{c} and the fact that $\mu_{x}\circ\mu_{y}=\mu_{xy}$
and $\underset{x}{\mu_{*}}\circ\underset{y}{\mu_{*}}=\underset{xy}{\mu_{*}}$.

(ii) Let\emph{ }$\nu\in FP(G\diagup\mu)$, be defined by $\nu(\mu_{x})=\mu(x)\,\,\forall\, x\in G$.
We show that $\nu\in NF(G\diagup\mu)$. Since
\begin{eqnarray*}
\nu(\mu_{x}\circ\mu_{y}) & = & \nu(\mu_{xy})\\
 & = & \mu(xy)\qquad\,\,\qquad[\mbox{by definition of \ensuremath{\nu}}]\\
 & \geq & \mu(x)\wedge\mu(y)\,\,\,\qquad\,\,[\mu\in NF(G)]\\
 & = & \nu(\mu_{x})\wedge\nu(\mu_{y})\quad\,[\mbox{by definition of \ensuremath{\nu}}]
\end{eqnarray*}
$\forall\, x,y\in G$, and
\begin{eqnarray*}
\nu((\mu_{x})^{-1}) & = & \nu(\mu_{x^{-1}})=\mu(x^{-1})\geq\mu(x)=\nu(\mu_{x})\qquad\,\,
\end{eqnarray*}
$\forall\, x\in G$. Hence $\nu\in F(G\diagup\mu)$. Further, since
\begin{eqnarray*}
\nu((\mu_{x}\circ\mu_{y})\circ(\mu_{x})^{-1}) & = & \nu(\mu_{xy}\circ\mu_{x^{-1}})\\
 & = & \nu(\mu_{xy\cdot x^{-1}})\\
 & = & \mu(xy\cdot x^{-1})\,\,\quad\,[\mbox{by definition of \ensuremath{\nu}}]\\
 & = & \mu(y)\qquad\,\,\,\,\,\,\qquad[\mu\in NF(G)]\\
 & = & \nu(\mu_{y}).\qquad\qquad[\mbox{by definition of \ensuremath{\nu}}]
\end{eqnarray*}
$\forall\, x,y\in G$. Hence $\nu\in NF(G\diagup\mu)$. \end {proof}

\noindent \begin {theorem}\label{e}Let $\mu\in NF(G)$. Define a
mapping $\theta:G\rightarrow G\diagup\mu$ as follows:
\begin{eqnarray}
\theta(x) & = & \mu_{x}\,\,\forall\, x\in G.\label{eq:4a}
\end{eqnarray}
Then $\theta$ is homomorphism with kernel $\mu_{*}$. \end {theorem}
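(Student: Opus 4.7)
The plan is to verify the two assertions of the theorem in turn: that $\theta$ preserves the operation, and that its kernel is exactly $\mu_*$. Both parts are essentially bookkeeping on top of Theorem \ref{s} (which gave us the AG-group structure on $G/\mu$ with identity $\mu_e$) and Theorem \ref{c} (which characterized equality of fuzzy cosets).

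For the homomorphism part, I would simply unwind the definitions. By the definition of $\theta$, $\theta(xy)=\mu_{xy}$, and by the composition law in $G/\mu$ established in Theorem \ref{s}, $\mu_x\circ\mu_y=\mu_{xy}$. Hence $\theta(xy)=\mu_x\circ\mu_y=\theta(x)\circ\theta(y)$ for all $x,y\in G$. This is a one-line verification; the substantive content is already hidden in the well-definedness proof of Theorem \ref{s}.

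For the kernel, the identity of $G/\mu$ is $\mu_e$, so $\ker\theta=\{x\in G:\mu_x=\mu_e\}$. I would apply Theorem \ref{c} to convert this to $\{x\in G:\underset{x}{\mu_*}=\underset{e}{\mu_*}\}=\{x\in G:\underset{x}{\mu_*}=\mu_*\}$. The forward containment is immediate: if $\underset{x}{\mu_*}=\mu_*$, then $x=ex\in\underset{x}{\mu_*}=\mu_*$. For the reverse, given $x\in\mu_*$, one checks $\underset{x}{\mu_*}\subseteq\mu_*$ via the AG-subgroup property of $\mu_*$, and then uses the left invertive law $(hx^{-1})x=(xx^{-1})h=h$ to get the opposite inclusion.

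Alternatively, and perhaps more cleanly, I would work directly: $\mu_x=\mu_e$ iff $\mu(gx^{-1})=\mu(g)$ for all $g\in G$. By Proposition \ref{b-1}, $\mu(gx^{-1})=\mu(x^{-1}g)$, so this becomes $\mu(x^{-1}g)=\mu(g)$ for all $g$. Lemma \ref{a} then gives exactly $\mu(x^{-1})=\mu(e)$, and Lemma \ref{ad}(ii) converts this to $\mu(x)=\mu(e)$, i.e.\ $x\in\mu_*$.

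There is no real obstacle here; the proof is a direct application of previously established results. The only point requiring a brief moment of attention is identifying the identity of $G/\mu$ as $\mu_e$ so that the kernel is defined correctly, and remembering to invoke Proposition \ref{b-1} (available since $\mu\in NF(G)\subseteq F(G)$) to swap the order in $\mu(gx^{-1})$ before applying Lemma \ref{a}.
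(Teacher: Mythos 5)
Your proposal is correct, and its skeleton matches the paper's: the homomorphism part is the identical one-line unwinding $\theta(xy)=\mu_{xy}=\mu_{x}\circ\mu_{y}=\theta(x)\circ\theta(y)$, with all the real work delegated to the well-definedness argument of Theorem \ref{s}. Where you diverge is in identifying the kernel. The paper writes $\mu_{x}=\mu_{e}\Leftrightarrow\mu(x)=\mu(e)$ and cites Theorem \ref{aa} for this equivalence; but Theorem \ref{aa} only supplies the forward implication $\mu_{x}=\mu_{e}\Rightarrow\mu(x)=\mu(e)$, so the containment $\mu_{*}\subseteq\ker\theta$ is not actually covered by that citation. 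Your second route closes this gap cleanly: $\mu_{x}=\mu_{e}$ iff $\mu(gx^{-1})=\mu(g)$ for all $g$ (after noting $\mu(ge^{-1})=\mu(ge)=\mu(eg)=\mu(g)$ via Proposition \ref{b-1}), which by Proposition \ref{b-1} again becomes $\mu(x^{-1}g)=\mu(g)$ for all $g$, and Lemma \ref{a} is a genuine biconditional giving $\mu(x^{-1})=\mu(e)$, hence $\mu(x)=\mu(e)$ by Lemma \ref{ad}(ii). Every step reverses, so you get $\ker\theta=\mu_{*}$ as a true equality. Your first route through Theorem \ref{c} and the coset identity $(hx^{-1})x=(xx^{-1})h=h$ also works, at the cost of needing that $\mu_{*}$ is an AG-subgroup and that $\underset{e}{\mu_{*}}=\mu_{*}$. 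In short: same strategy, different (and in the kernel step, more watertight) supporting lemmas; also note that normality of $\mu$ is needed only to make $G\diagup\mu$ exist via Theorem \ref{s}, not for your kernel computation itself.
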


\noindent \begin {proof} Since 
\begin{eqnarray*}
\theta(xy) & = & \mu_{xy}=\mu_{x}\circ\mu_{y}=\theta(x)\theta(y)\,\,\,\forall\, x,y\in G.
\end{eqnarray*}
Hence $\theta$ is homomorphism. Further, the kernel of $\theta$
consists of all $x\in G$ for which $\mu_{x}=\mu_{e}\Leftrightarrow\mu(x)=\mu(e)$,
(by Theorem \ref{aa}) $\Leftrightarrow x\in\mu_{*}$. Thus $Ker\theta=\mu_{*}$.
\end {proof}

\noindent \begin {theorem}\emph{ }Let $\mu\in NF(G)$, and $G\diagup\mu$
is an AG-group. Then each $\zeta\in NF(G\diagup\mu)$ corresponds
in a natural way to $\nu\in NF(G)$.\end {theorem}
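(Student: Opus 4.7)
The plan is to construct the corresponding $\nu$ as the pullback of $\zeta$ along the canonical quotient homomorphism. By Theorem~\ref{e}, the map $\theta:G\to G\diagup\mu$ given by $\theta(x)=\mu_{x}$ is a homomorphism of AG-groups, and it is surjective since, by Theorem~\ref{s}, $G\diagup\mu=\{\mu_{x}\,:\,x\in G\}$. I would therefore define
\[
\nu:=\zeta\circ\theta,\qquad\text{i.e.}\quad\nu(x)=\zeta(\mu_{x})\,\,\forall\,x\in G.
\]

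With this choice, the verification is essentially immediate by quoting the second theorem of Section~3 (the one asserting that $\mu\circ f\in NF(G)$ whenever $f$ is a homomorphism on $G$ and $\mu\in NF(f(G))$). Indeed, since $\theta$ is surjective we have $\theta(G)=G\diagup\mu$, so the hypothesis $\zeta\in NF(G\diagup\mu)$ becomes $\zeta\in NF(\theta(G))$; applying that theorem with $f=\theta$ and with $\zeta$ in place of $\mu$ yields $\nu=\zeta\circ\theta\in NF(G)$ at once. Thus no fresh computation of the fuzzy AG-subgroup axioms or of the normality identity $\nu(xy\cdot x^{-1})=\nu(y)$ is required — they are handled uniformly by the pullback lemma.

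For the \emph{naturality} claim I would simply note that the assignment $\zeta\mapsto\zeta\circ\theta$ involves no arbitrary choice: it is completely determined by $\mu$ through its canonical projection $\theta$. Moreover, this correspondence dovetails with Theorem~\ref{d}(ii): if one starts from any $\nu_{0}\in NF(G)$ that is constant on the fuzzy cosets of $\mu$ (for instance any $\nu_{0}$ containing $\mu_{*}$ in its level sets), sets $\zeta(\mu_{x}):=\nu_{0}(x)$, and then forms $\zeta\circ\theta$, one recovers $\nu_{0}$. Hence the construction is the natural inverse of the passage $\nu\mapsto\zeta$ given in Theorem~\ref{d}(ii).

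The only real care is to check surjectivity of $\theta$ so that $\theta(G)$ genuinely equals $G\diagup\mu$, which is immediate from the definition of $G\diagup\mu$ given in Theorem~\ref{s}, and to observe that $\zeta\circ\theta$ is a well-defined element of $FP(G)$ since $\theta$ is a function and $\zeta:G\diagup\mu\to[0,1]$. I do not anticipate any obstacle beyond these bookkeeping remarks; the heart of the argument is simply the recognition that all the machinery has already been put in place, and that the natural correspondence is just composition with the canonical quotient map.
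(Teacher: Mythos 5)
Your construction of $\nu$ is exactly the paper's: both define $\nu(x)=\zeta(\mu_{x})=\zeta(\theta(x))$ for all $x\in G$. The difference is purely in the verification. The paper re-derives the three required properties by direct computation, using $\mu_{xy}=\mu_{x}\circ\mu_{y}$, $\mu_{x^{-1}}=(\mu_{x})^{-1}$ and the normality of $\zeta$ at each step; you instead observe that $\theta$ is a surjective homomorphism (Theorems \ref{e} and \ref{s}), so that $\zeta\in NF(\theta(G))$, and then quote the paper's earlier unlabelled theorem asserting that $\mu\circ f\in NF(G)$ whenever $f$ is a homomorphism on $G$ and $\mu\in NF(f(G))$. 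That citation is legitimate --- the hypotheses match exactly with $f=\theta$ and $\zeta$ in place of $\mu$ --- and it buys you a proof with no fresh computation, at the cost of making the argument depend on that earlier result (whose proof is, in substance, the very computation the paper repeats here). Your closing remarks on naturality and on inverting the correspondence of Theorem \ref{d}(ii) are an embellishment beyond what the statement asks; the claim that $\zeta(\mu_{x}):=\nu_{0}(x)$ is well defined needs $\mu_{x}=\mu_{y}\Rightarrow\nu_{0}(x)=\nu_{0}(y)$, which you only gesture at, but since the theorem itself leaves ``corresponds in a natural way'' informal, this does not affect the correctness of your proof of the stated result.
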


\noindent \begin {proof} Let $\zeta\in NF(G\diagup\mu)$. Define
a mapping $\nu:G\rightarrow[0,1]$ as follows:
\begin{eqnarray*}
\nu(x) & = & \zeta(\mu_{x})\,\,\forall\, x\in G.
\end{eqnarray*}
First we show that $\nu\in F(G)$. Since $\forall\, x,y\in G$,
\begin{eqnarray*}
\nu(xy) & = & \zeta(\mu_{xy})\\
 & = & \zeta(\mu_{x}\circ\mu_{y})\\
 & \geq & \zeta(\mu_{x})\wedge\zeta(\mu_{y})\qquad[\zeta\in NF(G\diagup\mu)]\\
 & = & \nu(x)\wedge\nu(y)
\end{eqnarray*}
and $\forall\, x\in G$,
\begin{eqnarray*}
\nu(x^{-1}) & = & \zeta(\mu_{x^{-1}})=\zeta(\mu_{x})^{-1}\geq\zeta(\mu_{x})=\nu(x)\qquad\qquad\qquad
\end{eqnarray*}
Thus $\nu\in F(G)$.

\noindent Further, since $\forall\, x,y\in G$,
\begin{eqnarray*}
\nu(xy\cdot x^{-1}) & = & \zeta(\mu_{xy\cdot x^{-1}})\\
 & = & \zeta(\mu_{y})\qquad[\mu\in NF(G)]\\
 & = & \nu(y).
\end{eqnarray*}
Hence $\nu\in NF(G)$. \end {proof}

\noindent In the following we introduce fuzzy Lagrange's Theorem for
AG-group of finite order. We start with the following definition.

\noindent \begin {definition} Let $G$ be a finite AG-group, $\mu\in F(G)$
and $G\diagup\mu$ is an AG-group. Then the cardinality of $G\diagup\mu$
is called the index of fuzzy AG-subgroup of $\mu$ in $G$ written
as $[G:\mu]$.\end {definition}

\noindent \begin {theorem} (Fuzzy Lagrange's Theorem for AG-subgroup).
Let $G$ be a finite AG-group, $\mu\in F(G)$. Then the index of fuzzy
AG-subgroup of $\mu$ divides the order of $G$.\end {theorem}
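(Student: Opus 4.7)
The plan is to reduce the fuzzy statement to the classical Lagrange's theorem for AG-groups due to Kamran \cite{Kamran}, by exploiting the isomorphism $G\diagup\mu \cong G\diagup\mu_*$ established in Theorem \ref{d}(i). The hypothesis that $G\diagup\mu$ is an AG-group (built into the preceding definition of index) together with Theorem \ref{s} lets us treat $\mu$ as a normal fuzzy AG-subgroup, so the full machinery of the previous section is available.

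The first step is to verify that the crisp level set $\mu_{*}=\{x\in G:\mu(x)=\mu(e)\}$ is an AG-subgroup of $G$. Closure under the product is immediate: for $x,y\in\mu_{*}$ the defining inequality of $F(G)$ gives $\mu(xy)\geq\mu(x)\wedge\mu(y)=\mu(e)$, and Lemma \ref{ad}(i) pins this down to $\mu(xy)=\mu(e)$, so $xy\in\mu_{*}$. Closure under inverses follows from Lemma \ref{ad}(ii), since $\mu(x^{-1})=\mu(x)=\mu(e)$. Hence $\mu_{*}$ is a genuine AG-subgroup of the finite AG-group $G$.

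The second step is to pass through the isomorphism of Theorem \ref{d}(i). That theorem provides an AG-group isomorphism $G\diagup\mu\cong G\diagup\mu_{*}$, so as sets $|G\diagup\mu|=|G\diagup\mu_{*}|$, which means $[G:\mu]=[G:\mu_{*}]$. Since $G$ is finite and $\mu_{*}$ is an AG-subgroup, Kamran's Lagrange theorem for AG-groups \cite{Kamran} asserts that $[G:\mu_{*}]$ divides $|G|$. Combining these two observations yields $[G:\mu]\mid |G|$, as required.

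I do not foresee a serious obstacle; the argument is essentially a translation between the fuzzy and crisp settings via Theorem \ref{d}(i). The only point that merits care is ensuring that the hypotheses of Theorem \ref{d}(i) are actually met — that is, confirming from the definition of $[G:\mu]$ that $G\diagup\mu$ being an AG-group forces (via Theorem \ref{s}) the normality needed to invoke Theorem \ref{d}(i), and checking the minor verification that $\mu_{*}$ is an AG-subgroup as above.
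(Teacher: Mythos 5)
Your proof is correct, and it reaches the conclusion by a noticeably shorter route than the paper does. Both arguments ultimately reduce the statement to Kamran's crisp Lagrange theorem \cite{Kamran} by matching fuzzy cosets bijectively with cosets of a crisp AG-subgroup, but they package that bijection differently. You simply invoke Theorem \ref{d}(i), which already asserts $G\diagup\mu\cong G\diagup\mu_{*}$, and add the (worthwhile, and omitted by the paper) verification that $\mu_{*}$ is an AG-subgroup via Lemma \ref{ad}. The paper instead rebuilds the bijection by hand: it takes the homomorphism $\theta(x)=\mu_{x}$ of Theorem \ref{e}, introduces $H=\{h\in G:\mu_{h}=\mu_{e}\}$ (which by Theorem \ref{c} is exactly $\mu_{*}$), decomposes $G$ into cosets $Hx_{1},\dots,Hx_{k}$, and checks directly that $Hx_{i}\mapsto\mu_{x_{i}}$ is well-defined and injective --- in effect re-proving the special case of Theorem \ref{d}(i) it needs. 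Your version buys brevity and makes the logical dependence on Theorem \ref{d}(i) explicit; the paper's version is self-contained at the level of the coset decomposition but redundant given its own earlier results. One caveat applies equally to both proofs and you are right to flag it: the theorem is stated for $\mu\in F(G)$, while Theorems \ref{s}, \ref{d} and \ref{e} are proved under the hypothesis $\mu\in NF(G)$; the preceding definition of $[G:\mu]$ only postulates that $G\diagup\mu$ is an AG-group, so strictly speaking the reduction to the normal case should be justified (or the hypothesis strengthened to $\mu\in NF(G)$), but this is a defect of the paper's statement rather than of your argument.
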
\begin {proof}
It follows from Theorem \ref{e}, that there is homomorphism $\theta$
from $G$ into $G\diagup\mu$, the set of all fuzzy cosets of $\mu$,
defined in (\ref{eq:4a}). Let $H$ be an AG-subgroup of $G$ defined
by $H=\{h\in G\,:\,\mu_{h}=\mu_{e}\}$. Let $h\in H$, then $\mu_{h}=\mu_{e}\Leftrightarrow\underset{h}{\mu_{*}}=\underset{e}{\mu_{*}}$
using Theorem \ref{c}. Therefore, $H=\{h\in G\,:\,\underset{h}{\mu_{*}}=\underset{e}{\mu_{*}}\}$.
Now decomposing $G$ as a disjoint union of the cosets of $G$ with
respect to $H$ i.e.
\begin{eqnarray}
G & = & (H=Hx_{1})\cup Hx_{2}\cup\cdots\cup Hx_{k},\label{eq:5a}
\end{eqnarray}
where $x_{1}\in H$ and $x_{i}\in G;\,1\leq i\leq k$. Now, we show
that corresponding to each cost $Hx_{i};\,\,1\leq i\leq k$, given
in (\ref{eq:5a}) there is a fuzzy coset belonging to $G\diagup\mu$,
and further this correspondence is one-one. To see this, consider
any coset $Hx_{i}$ for any $h\in H$, we have that; $\theta(hx_{i})=\mu_{hx_{i}}=\mu_{h}\circ\mu_{x_{i}}=\mu_{e}\cdot\mu_{x_{i}}=\mu_{ex_{i}}=\mu_{x_{i}}$.
Thus $\theta$ maps each element of $Hx_{i}$ into the fuzzy cosets
$\mu_{x_{i}}$.

\noindent Now we show that $\theta$ is well-defined. Let $Hx_{i}=Hx_{j}$$\ensuremath{\mbox{ , for each \ensuremath{i,j:}\,}1\leq i\leq k}\,\ \textrm{and }\ensuremath{1\leq j\leq k}.$
Then
\begin{eqnarray*}
x_{j}^{-1}x_{i}\in H &  & \qquad[\mbox{cosets in AG-groups}]\\
\Rightarrow\mu_{x_{j}^{-1}x_{i}} & = & \mu_{e}\\
\Rightarrow\underset{(x_{j}^{-1}x_{i})}{\mu_{*}} & = & \underset{e}{\mu_{*}}\qquad[\mbox{by Theorem \ref{c}]}\\
\Rightarrow\underset{(x_{j}^{-1}x_{i})}{\mu_{*}}\circ\underset{x_{i}^{-1}}{\mu_{*}} & = & \underset{e}{\mu_{*}}\circ\underset{x_{i}^{-1}}{\mu_{*}}\\
\Rightarrow\underset{(x_{j}^{-1}x_{i})x_{i}^{-1}}{\mu_{*}} & = & \underset{ex_{i}^{-1}}{\mu_{*}}
\end{eqnarray*}

\noindent 
\begin{eqnarray*}
\Rightarrow\underset{(x_{i}^{-1}x_{i})x_{j}^{-1}}{\mu_{*}} & = & \underset{ex_{i}^{-1}}{\mu_{*}}\qquad[\mbox{using left invertive law}]\\
\Rightarrow\underset{x_{i}^{-1}}{\mu_{*}} & = & \underset{x_{j}^{-1}}{\mu_{*}}\\
\Rightarrow\underset{x_{i}}{\mu_{*}} & = & \underset{x_{j}}{\mu_{*}}\\
\Rightarrow\mu_{x_{i}} & = & \mu_{x_{j}}\,\qquad[\mbox{by Theorem \ref{c}]}\\
\Rightarrow\theta(Hx_{i}) & = & \theta(Hx_{j}).
\end{eqnarray*}
Thus $\theta$ is well-defined.

\noindent Further, we show that $\theta$ is one-one; for each $i,\, j$
where $1\leq i\leq k$ and $1\leq j\leq k$; assume that
\begin{eqnarray*}
\theta(Hx_{i}) & = & \theta(Hx_{j})\\
\Rightarrow\mu_{x_{i}} & = & \mu_{x_{j}}\\
\Rightarrow\underset{x_{i}}{\mu_{*}} & = & \underset{x_{j}}{\mu_{*}}\qquad\qquad[\mbox{ Theorem}\ref{c}]\\
\Rightarrow\underset{x_{i}^{-1}}{\mu_{*}} & = & \underset{x_{j}^{-1}}{\mu_{*}}\qquad[\mbox{cosets in AG-groups }]\\
\Rightarrow\underset{e}{\mu_{*}}\circ\underset{x_{i}^{-1}}{\mu_{*}} & = & \underset{e}{\mu_{*}}\circ\underset{x_{j}^{-1}}{\mu_{*}}
\end{eqnarray*}
\begin{eqnarray*}
\Rightarrow\underset{ex_{i}^{-1}}{\mu_{*}} & = & \underset{ex_{j}^{-1}\textrm{}}{\mu_{*}}\textrm{\qquad\qquad\qquad\qquad\qquad\qquad\qquad}\\
\Rightarrow\underset{ex_{i}^{-1}}{\mu_{*}} & = & \underset{(x_{i}^{-1}x_{i}\cdot x_{j}^{-1})}{\mu_{*}}\\
\Rightarrow\underset{ex_{i}^{-1}}{\mu_{*}} & = & \underset{(x_{j}^{-1}x_{i}\cdot x_{i}^{-1})}{\mu_{*}}\\
\Rightarrow\underset{(x_{j}^{-1}x_{i})}{\mu_{*}}\circ\underset{x_{i}^{-1}}{\mu_{*}} & = & \underset{e}{\mu_{*}}\circ\underset{x_{i}^{-1}}{\mu_{*}}
\end{eqnarray*}
\begin{eqnarray*}
\Rightarrow\underset{(x_{j}^{-1}x_{i})}{\mu_{*}} & = & \underset{e}{\mu_{*}}\\
\Rightarrow\mu_{x_{j}^{-1}x_{i}} & = & \mu_{e}\qquad[\mbox{by Theorem }\ref{c}]\\
\Rightarrow x_{j}^{-1}x_{i}\in H
\end{eqnarray*}
\begin{eqnarray*}
\Leftrightarrow Hx_{i} & = & Hx_{j}\qquad\mbox{\ensuremath{\mbox{[for each \ensuremath{i\,}and \ensuremath{j\,}where \,\,}1\leq i\leq k\,}and \ensuremath{1\leq j\leq k.}]}
\end{eqnarray*}
From above discussion it is now clear that the number of distinct
cosets of $H$ (index) in $G$ equals the number of fuzzy cosets of
$\mu$, which is a divisor of the order of $G$. Hence we conclude
that the index of $\mu$ also divides the order of $G$. \end {proof}

\end{document}